\title[Zone and double zone diagrams]{Zone and double zone diagrams in abstract spaces}
\author{Daniel Reem and Simeon Reich}
\address{Department of Mathematics, The Technion - Israel Institute of 
Technology, 32000 Haifa, Israel. Current  address (July 2011) of Daniel Reem: Department of Mathematics, University of Haifa, Mount Carmel, 31905 Haifa, Israel. }
\email{dream@tx.technion.ac.il; sreich@tx.technion.ac.il}
\subjclass[2000]{06B23, 47H10, 51K99, 54E35} \keywords{ Dominance region,
double zone diagram, fixed point,
$m$-space, metric space, partially ordered set, trisector, zone
diagram}
\newtheorem{thm}{Theorem}[section]
\newtheorem{cor}[thm]{Corollary}
\newtheorem{lem}[thm]{Lemma}
\newtheorem{defin}[thm]{Definition}
\theoremstyle{definition}
\newtheorem{expl}[thm]{Example}
\newtheorem{rem}[thm]{Remark}
\newcommand{\dom}{\textnormal{dom}}
\newcommand{\Dom}{\textnormal{Dom}}
\newcommand{\R}{\mathbb{R}}
\newcommand{\N}{\mathbb{N}}
\begin{document}
\begin{abstract}
A zone diagram is a relatively new concept which was first 
defined and studied  by T. Asano, J.
Matou{\v{s}}ek and T. Tokuyama.
It can be interpreted  as a state of  equilibrium between several mutually hostile kingdoms.
Formally, it is a fixed point of a certain mapping. These authors
considered the Euclidean plane and proved the existence and uniqueness of zone
diagrams there. In the present paper we generalize this concept in various
ways. We consider general sites in $m$-spaces (a simple generalization of
metric spaces) and prove several existence and (non)uniqueness results in 
this setting. In contrast to previous works, our (rather simple) proofs
are based on purely order theoretic arguments.  Many explicit examples are given, and some of them illustrate new phenomena which occur in the general case. We also re-interpret zone diagrams as a stable configuration in a certain combinatorial game, and provide an
algorithm for finding this configuration in a particular case.
\end{abstract}

\maketitle

\section{Introduction and notation}
A zone diagram is a relatively new concept 
 which can be interpreted  as a state of  equilibrium
 between several mutually hostile kingdoms. More formally, let $(X,d)$ be a metric space, and suppose $P=(P_k)_{k\in K}$ is a given tuple of nonempty sets in $X$. A zone diagram with respect to $P$ is a tuple $R=(R_k)_{k\in K}$ of nonempty sets such that each $R_k$ is the set of all $x\in X$ which are closer to $P_k$
than to $\bigcup_{j\neq k} R_j$. In other words, $R$ is a fixed point of a certain mapping (called
the Dom mapping). Neither its existence, nor its uniqueness, are obvious 
{\em a priori}.

The concept of a zone diagram was first defined and studied
by T. Asano, J. Matou{\v{s}}ek and T. Tokuyama \cite{AMT2,AMTn}, in the case
where $X$ was the Euclidean plane, $K$ was finite, each $P_k$ was a single
point and all these points were different. They proved
the existence and uniqueness of a zone diagram in this case. Their proofs rely heavily  on
the above setting.

In this paper we generalize this concept in various ways. As we have already mentioned, we
consider general tuples of sets $P=(P_k)_{k\in K}$, and
general metric spaces. In fact, we consider  a more general setting
($m$-spaces; see Section
\ref{sec:exmetric}).  One of the advantages of this generalization,
besides its leading to general results, is that it yields a better
understanding of the concept, and enables us to give plenty of explicit examples
of zone diagrams, a task which is quite hard in the case of
singleton-site zone diagrams in the Euclidean plane. These examples 
illustrate some new phenomena  which occur in the general case. Moreover, this generalization also
opens up new possibilities for applying this concept in other parts of
mathematics and elsewhere.

Exact definitions, as well as several examples, are given in Sections \ref{sec:examples} and
\ref{sec:exmetric}. In Section \ref{sec:combi} we re-interpret the concept of a zone
diagram as a stable configuration in a certain combinatorial game.

Our main existence results are Theorem \ref{thm:n_is_2}, which shows
the existence of a zone diagram of 2 sites in any $m$-space, and
Theorem \ref{thm:DoubleZoneDiagram} which shows the existence of a
double zone diagram (a fixed point of the second iteration $\Dom^2$)
of any number (possibly infinite) of sites. Our method, which is different from the methods
described in \cite{AMT2} and \cite{AMTn}, is based on the
Knaster-Tarski fixed point theorem for monotone (increasing)
mappings. [One, in fact two, of the arguments in \cite{AMTn} do make
use of a fixed point theorem (the Schauder fixed point theorem), but
for continuous mappings rather than monotone ones.] It can be seen
that our (rather simple) proofs have a purely order theoretic character; 
there is
no need to take into account any other considerations  (algebraic,
topological, analytical, etc.). As a corollary we obtain the
existence of a trisector in any Hilbert space, and the proof can be
considered ``conceptual''; see  Remark ~\ref{rem:trisector} 
in Section \ref{sec:existence}.

In Section \ref{sec:uniqueness} we discuss the uniqueness question. In general,
 there can be several zone diagrams, but we do present several necessary and
sufficient conditions for uniqueness. In Section \ref{sec:algorithm} we describe a simple algorithm for constructing a zone diagram of
order 2 and a double zone diagram of any order in the case where $X$ is a finite set.
We conclude the paper by formulating some interesting open problems.

We end this introduction with a couple of words about notation.
Throughout the text we will make use of tuples, the components of which are sets.
Every operation or relation between such tuples, or on a single tuple, is done component-wise. Hence, for example, if $K\neq \emptyset$ is a set of indices, and if $R=(R_k)_{k\in K}$ and $S=(S_k)_{k\in K}$ are two tuples
 of sets, then $R\bigcap S=(R_k\bigcap S_k)_{k\in K}$,  $\overline{R}=(\overline{R_k})_{k\in K}$,
and $R\subseteq S$ means $R_k\subseteq S_k$ for each $k\in K$.
The tuple $(X)_{k\in K}$ is the tuple all the components of which are equal
to $X$. Given a set $X$, we denote by   $\mathcal{P}^{*}(X)$ the set of all
nonempty subsets of $X$, and by $|X|$ the cardinal number of $X$.

\section{Definitions and examples}\label{sec:examples}
Zone diagrams can be naturally defined in any metric
space. In Section \ref{sec:existence} we will prove a theorem which
ensures the existence of zone diagram of two sites in any metric
space. Surprisingly, the proof can be carried
over to a more general setting, which we call $m$-spaces.
However, since the latter concept seems to be new, and since the
concept of a zone diagram is best understood in the context of metric
spaces, we  discuss it first in this context. The
corresponding generalization will be carried out in Section
\ref{sec:exmetric}.
\begin{defin}
Let $(X,d)$ be a metric space. 
 For any $P,A\in \mathcal{P}^{*}(X)$, the dominance region
$\dom(P,A)$ of $P$ with respect to $A$ is the set of all $x\in X$
which are closer to $P$ than to $A$,
 i.e., it is the set
\begin{equation*}
\dom(P,A)=\{x\in X: d(x,P)\leq d(x,A)\}.
\end{equation*}
The function $\dom:\mathcal{P}^{*}(X) \times \mathcal{P}^{*}(X)\to 
\mathcal{P}^{*}(X)$ is called the dom mapping.
\end{defin}
For example, if $a$ and $p$ are two different points in a Hilbert space $X$, and if $P=\{p\}$ and $A=\{a\}$, then $\dom(P,A)$ is the half-space containing $P$ determined by the hyperplane passing through the middle of the line segment $[p,a]$ and perpendicular to it; $\dom(A,P)$ is the other half-space.
\begin{defin}
Let $(X,d)$ be a metric space and let $K$ be a set of at least 2 elements (indices), possibly
infinite. Given a
 tuple $(P_k)_{k\in K}$ of nonempty subsets
$P_k\subseteq X$, a zone diagram  with respect to
that tuple is a tuple $R=(R_k)_{k\in K}$ of nonempty subsets
$R_k\subseteq X$ such that
\begin{equation*}
R_k=\dom(P_k,\textstyle{\underset{j\neq k}\bigcup R_j})\quad \forall k\in K.
\end{equation*}
In other words, if we define $X_k=\mathcal{P}^{*}(X)$, then a zone diagram
is a fixed point of the mapping $\Dom:\underset{{k\in K}}\bigtimes
X_k\to \underset{{k\in K}}\bigtimes
X_k$, defined by 
\begin{equation}\label{eq:TZoneDef}
\Dom(R)=(\dom(P_k,\textstyle{\underset{j\neq k}\bigcup R_j}))_{k\in K}.
\end{equation}
If the second iteration $\Dom^2=\Dom\circ \Dom$ has a
fixed point $R$, we say that $R$ is a double zone diagram in $X$.
\end{defin}
If we interpret each $R_k$ as an ancient kingdom, and each $P_k$ as a 
site or a collection of sites in $R_k$ (cities, army camps, islands, 
etc.), then a 
zone diagram is a configuration in which each kingdom $R_k$ consists of all the points $x\in X$ which are closer to $P_k$ than to the other kingdoms.   This can be regarded as a state of equilibrium between the kingdoms in the following sense.
 Suppose the kingdoms are mutually hostile. In particular, each kingdom has to defend its borders against attacks from the other kingdoms. Due to various considerations (resources, field conditions, etc.), the defending army is usually situated only in (part of) the sites $P_k$ (unless the kingdom moves forces to attack another kingdom), and each $P_k$ remains unchanged.  Hence, if $(R_k)_{k\in K}$ is a zone diagram, then each point in each kingdom can be defended at least as fast as it takes to attack it from any other kingdom, and no kingdom can enlarge its territory without violating this condition. (But see Example \ref{ex:simple} and Example \ref{ex:ProbInfinite} for some non-realistic counterexamples.)

 A double zone diagram $R$ is a different state of equilibrium between the kingdoms: now each
 kingdom $R_k$ consists of all points $x\in X$ which are closer to $P_k$ than to the union $\bigcup_{j\neq k}(\Dom R)_j$. We note that any zone diagram $R$ is obviously a double zone diagram
since $\Dom^2 R=\Dom(\Dom R)=\Dom R=R$, but
the converse is not necessarily true as the following example shows.

\begin{expl}\label{ex:simple}
Let $X=\{-1,0,1\}$ be a subset of $\mathbb{R}$ with the usual
 metric, and let $P_1=\{-1\},\,P_2=\{1\}$. If we let
$R_1=P_1$ and $R_2=\{0,1\}$, then $-1$ represents all the points in
$X$ closer to $P_1$ than to $R_2$, and $0,1$ are all the points in
$X$ closer to $P_2$ than to $R_1$, so
\begin{equation*}
\dom(P_1,R_2)=\{-1\}=R_1\quad \textrm{and}\quad \dom(P_2,R_1)=\{0,1\}=R_2,
\end{equation*}
i.e., $R=(R_1,R_2)$ is a zone diagram in $X$. Similarly, $S=(\{-1,0\},\{1\})$ is a different zone
diagram in $X$, and this shows that uniqueness does not hold in
general. However, if we replace the point $0$ with a point $a \in (0,1)$, 
then the modified $R$ is still a zone diagram, and it is unique. Indeed,
suppose $Z=(Z_1,Z_2)$ is another zone diagram. Obviously,
$P_1\subseteq \dom(P_1,Z_2)$ and $P_2\subseteq \dom(P_2,Z_1)$, so
any $x\neq -1$ is closer to $P_2$, and hence to $Z_2$, than to
$Z_1$. Thus $Z_1$ must be $P_1$, but then
$Z_2=\dom(P_2,Z_1)=\{a,1\}$, so $Z=R$.

Already in the original example, where $X = \{-1,0, 1\}$, it can be seen 
that a double zone diagram is not
necessarily a zone diagram, since
\begin{multline*}
\Dom^2(P_1,P_2)=(\dom(P_1,\dom(P_2,P_1)),\dom(P_2,\dom(P_1,P_2))\\
=(\dom(\{-1\},\{0,1\}),\dom(\{1\},\{-1,0\}))=(\{-1\},\{1\})=(P_1,P_2),
\end{multline*}
but $(P_1,P_2)$ is not a zone diagram because $P_1\neq
\{-1,0\}=\dom(P_1,P_2)$.  However, if we replace the point $0$ with 
a point $a \in (0,1)$,
then the double zone diagram and the zone diagram coincide.
\end{expl}
\begin{figure}[t]
\begin{minipage}[t]{1\textwidth}
\begin{center}
\scalebox{0.6} {\includegraphics{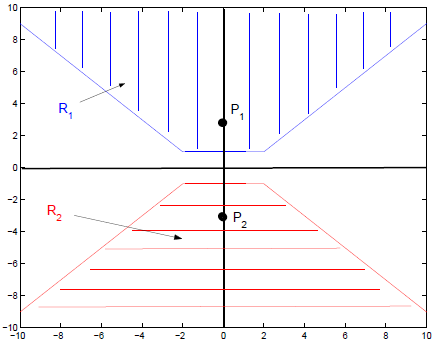}}
\end{center}
 \caption{ Example \ref{ex:maxnorm}}
\label{fig:maxnorm}
\end{minipage}%
\end{figure}

\begin{expl}\label{ex:maxnorm}
Let $X=\mathbb{R}^2$ with the max norm
$|x|=|(x_1,x_2)|=\max\{|x_1|,|x_2|\}$, and let
$P_1=\{(0,3)\},\,P_2=\{(0,-3)\}$. Suppose $f:\mathbb{R}\to
\mathbb{R}$ is the function defined by
\begin{equation*}
f(x_1)=\left\{\begin{array}{ll}
                -x_1-1 & x_1\leq -2 \\
                1 & x_1\in [-2,2] \\
                x_1-1 & x_1\geq 2.
              \end{array}
\right.
\end{equation*}
If $R_1$ and $R_2$ are the domains above and below the graphs of
$f$ and $-f$ respectively, i.e., $R_1=\{(x_1,x_2):x_2\geq
f(x_1)\}$ and $R_2=\{(x_1,x_2):x_2\leq -f(x_1)\}$, then
$R=(R_1,R_2)$ is a zone diagram in $X$; see Figure
\ref{fig:maxnorm}.

Indeed, if $x=(x_1,x_2)$ is in $R_1$, then there are 3
possibilities: $x_1\leq -2,\,x_1\in [-2,2]$ and $x_1\geq 2$. The
third case is treated in the same way as the first one, so it suffices to consider
only the first and the second. 
In the first case an elementary calculation shows that
$d(x,R_2)=d(x,(-2,-1))$. Hence $d(x,R_2)\geq d(x,P_1)$, because
$d(x,(-2,-1))\geq x_2+1\geq\max\{-x_1,|x_2-3|\}=d(x,P_1)$. In the
second case either $x_2\in [1,5]$ and then $d(x,P_1)\leq2\leq x_2+1=
d(x,R_2)$, or $x_2>5$ and then $d(x,P_1)=x_2-3<x_2+1= d(x,R_2)$.
Thus, in every case $d(x,P_1)\leq d(x,R_2)$, i.e., $R_1\subseteq
\dom(P_1,R_2)$.

On the other hand, suppose $d(x,P_1)\leq d(x,R_2)$ and assume to the
contrary that $x\notin R_1$. Then $x_2\geq 0$, for otherwise
$d(x,R_2)\leq d(x,R_1)<d(x,P_1)$. In addition, $|x_1|\leq 2$,
because otherwise, if, for example, $x_1<-2$, then using the fact that
$x\notin R_1$ implies $x_2<-x_1-1$, we arrive at the inequality
$d(x,R_2)\leq d(x,(-2,-1))=\max\{-x_1-2,x_2+1\}<|x_1|\leq
d(x,P_1)$. So $|x_1|\leq 2$, but then $d(x,R_2)\leq 2$, and since $x\notin R_1$,
we have, in fact,  $d(x,R_2)<2<d(x,P_1)$, a contradiction. Therefore
$\dom(P_1,R_2)\subseteq R_1$ and we get equality. In the same
way, $R_2=\dom(P_2,R_1)$.

A reader who is familiar with the concept of a trisector (see \cite{AMT2}), may have already noticed that the boundaries of $R_1$ and $R_2$ (denoted by $C_1$ and $C_2$, respectively) represent the 
components of a trisector, i.e., they satisfy the equations $C_1=\{x\in X: d(x,P_1)=d(x,C_2)\}$ and
$C_2=\{x\in X: d(x,P_2)=d(x,C_1)\}$. 
The sets $C_1$ and $C_2$ are indeed the graphs of convex/concave functions, 
but in contrast to the Euclidean case  (\cite[Theorem 2]{AMT2} 
and the discussion following it), these functions have a simple form and 
they are not analytic.
\end{expl}

In our next example the two sites $P_1$ and $P_2$ have a nonempty 
intersection.  

\begin{expl}\label{ex:ProbInfinite}
$X=\mathbb{R}^2$ with the Euclidean norm,
$P_1=\mathbb{Q}\times\{0\}$ and $P_2=(\N\,\bigcup\,(\mathbb{R}\backslash\mathbb{Q}))\times\{0\}$.
At first sight it seems that either a zone diagram may not exist at all, or that
it may be pathological if it does exist. Nevertheless, a simple
check shows that $(\R\times\{0\},\R^2)$ is a zone diagram, and so is
$(\R^2,\R\times\{0\})$. It is interesting to note that if we consider the infinite
family $P_x=\{(x,0)\},\,x\in \R$, then it is not clear at all whether
now there exists a zone diagram $R=(R_x)_x$. If it does exist, then it is
probably pathological.

The above example can be generalized: if $(X,d)$ is any metric space, and
if the tuple $P=(P_k)_{k\in K}$ has the property that 
$\overline{P_j}=\overline{P_k}\,\,\,\forall k,j\in K$,
then for any $i\in K$ and any tuple $R=(R_k)_{k\in K}$ with the property 
that $R_k=\overline{P_k}\,\,\,\forall k\neq i$ and $R_{i}=X$, we have 
that $R$ is a zone diagram in $X$. Hence, some restrictions on $P$ have 
to be imposed in order
to obtain uniqueness. (For instance, 
that $\inf\{d(\overline{P_k},\overline{P_j}): j\neq k\}>0$ for all $k\in K$; 
it is not clear, however, when this condition is sufficient; see 
Example \ref{ex:simple}.) 
\end{expl}
\begin{figure}[t]
\begin{minipage}[t]{1\textwidth}
\begin{center}
\scalebox{0.6} {\includegraphics{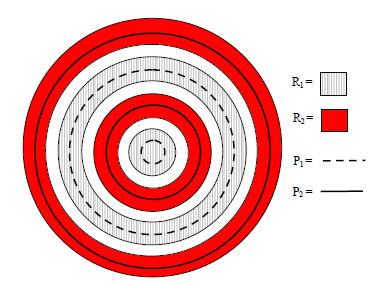}}
\end{center}
\caption{Example \ref{ex:rings}}
 \label{fig:rings}
\end{minipage}%
\end{figure}
\begin{expl}\label{ex:rings}
Let $(X,|\cdot|)$ be any normed space, and let
\begin{equation*}
P_1=\textstyle{\bigcup_{k=0}^{\infty}}\{x\in X: |x|=6k+1\},\quad
P_2=\textstyle{\bigcup_{k=0}^{\infty}}\{x\in X: |x|=6k+4\}.
\end{equation*}
It can easily be checked that $R=(R_1,R_2)$ is a zone diagram in
$X$, where
\begin{equation*}
 R_1=\textstyle{\bigcup_{k=0}^{\infty}}\{x\in X: 6k\leq|x|\leq
6k+2\},\,\,   R_2=\textstyle{\bigcup_{k=0}^{\infty}}\{x\in X: 6k+3\leq|x|\leq 6k+5\}.
\end{equation*}
See Figure \ref{fig:rings}. The zone diagram in this case is
unique; see Section \ref{sec:uniqueness}. Suppose now that we
modify this example by letting $P_k=\{x\in X: |x|=3k+1\},\,k\in\N\,\textstyle{\bigcup}\,\{0\}$. The resulting zone diagram is $R=(R_k)_{k=0}^{\infty}$, where
$R_k=\{x\in X: 3k\leq|x|\leq 3k+2\}$. We obtain the same array of rings
as before, but now each ring represents one and
 only one $R_k$.
\end{expl}

\section{Generalization to $m$-spaces}\label{sec:exmetric}
\begin{defin}
An $m$-space is a pair  $(X,d)$ of a nonempty set $X$ and a
function $d:X^2\to [-\infty,\infty]$ with the property that
\begin{equation}\label{eq:exmetric}
d(x,x)\leq d(x,y)
\,\,\forall x,y \in X. 
\end{equation}
\end{defin}
We call $d$ the distance function, although it is
usually not a true distance function since it is not assumed to be either
symmetric, positive or to satisfy the triangle inequality (take, for 
example, $X=\R$ and
$d(x,y)=\max\{x+1,y\}$).
Given $x\in X$ and $A\in \mathcal{P}^{*}(X)$, we denote
\begin{equation*}
d(x,A)=\inf\{d(x,y):y\in A\},
\end{equation*}
and call it the distance between the point $x$ and the set $A$. All the relevant concepts (the
dom mapping, zone diagrams, etc.) are defined exactly as in the metric
case, but now, however, the interpretations given in that case are less
clear.

We see that the only constraints on $d$ are condition
\eqref{eq:exmetric}, which implies that the natural property
$P\subseteq\dom(P,A)$ will be satisfied, and that its range is
$[-\infty,\infty]$. These requirements alone suffice for ensuring
the existence of zone diagrams of $2$ sites, and actually the
concept of an $m$-space has its origin in an examination of an earlier 
proof we had of the  existence of a zone diagram in the case $n=2$.
\begin{figure}[t]
\begin{minipage}[t]{0.5\textwidth}
\scalebox{0.86} {\includegraphics{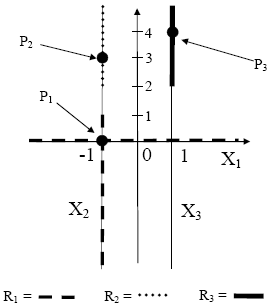}}
 \caption{Example \ref{ex:isolate}}
\label{fig:isolate}
\end{minipage}%
\hfill
\begin{minipage}[t]{0.5\textwidth}
\scalebox{0.64} {\includegraphics{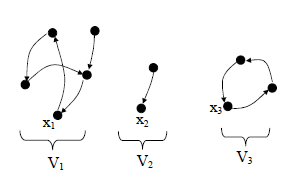}}
\caption{Example \ref{ex:graph}}
 \label{fig:graph}
\end{minipage}%
\end{figure}
We also remark that instead of taking $[-\infty,\infty]$ as the
range of $d$, one can take any totally ordered set which has the
greatest lower bound property, but we will confine ourselves to the
above definition.

We now provide several examples in order to illustrate the concepts of 
$m$-spaces and zone diagrams in this general setting.  
\begin{expl}
Let $X$ be any nonempty set and let $a<b$ be two real numbers. Suppose $d:X^2\to \mathbb{R}$ is defined by $d(x,x)=a<b=d(x,y), \,\, \forall x\neq y$. The function $d$ is usually not a metric. If $(P_k)_{k\in K}$ is any tuple of nonempty and pairwise disjoint
subsets of $X$, then any tuple $(R_k)_{k\in K}$ of nonempty and
pairwise disjoint subsets of $X$ for which $P_k\subseteq R_k$ and
$\bigcup_{k\in K} R_k=X$, is a zone diagram in $X$ of $|K|$ sites.
\end{expl}
\begin{expl}\label{ex:isolate}
Let $X=\bigcup_{i=1}^3 X_i$ where $X_1=\R\times
\{0\},\,X_2=\{-1\}\times \R,\,X_3=\{1\}\times \R$. Define
$d:X^2\to [-\infty,\infty]$ by $d(x,y)=|x-y|$ if $x,y\in X_i$ for
some $i$, and $d(x,y)=\infty$ otherwise. The function $d$ completely
isolates each component  $X_i$ of $X$ in the sense that a point
$x\in X$ ``feels'' (or ``is affected by'') only points from the
component to which it belongs. Let
$P_1=\{(-1,0)\},\,P_2=\{(-1,3)\},\,P_3=\{(1,4)\}$.  Then
$R=(R_k)_{k=1}^3$ is a zone diagram in $X$, where $R_1=(\R\times
\{0\})\bigcup (\{-1\}\times (-\infty,1]),\, R_2=\{-1\}\times
[2,\infty),\,R_3=\{1\}\times [2,\infty)$. See Figure \ref{fig:isolate}.
\end{expl}
\begin{expl}\label{ex:graph}
Let $G=(V,E)$ be a directed graph, and suppose $d:V^2\to
[0,\infty]$ assigns to $(x,y)\in V^2$ the length of the minimal finite
directed path starting from $x$ and ending in $y$ (with $d(x,x)=0$), 
including
$\infty$ if there is no such finite path. Let $G$ be the graph in
Figure \ref{fig:graph}, and
let $P_1=\{x_1,x_2\}$ and $P_2=\{x_2,x_3\}$. Then
$R=(V_1\bigcup \{x_2\},V_2\bigcup V_3)$ and $S=(V_1\bigcup V_2,\{x_2\}\bigcup
V_3)$ are two zone diagrams in $V$.
\end{expl}

\section{A combinatorial interpretation}\label{sec:combi}

In this section (which is not needed for later sections) we describe
a second interpretation of the
concept of a zone diagram as a certain combinatorial game of one player.

The player is given a set of points $X$, a distance function $d:X^2\to [0,\infty)$
and a tuple $(P_k)_{k\in K}$ of nonempty
subsets of $X$. For simplicity we assume that these sets are pairwise
disjoint, that $(X,d)$ is a finite metric space, and that $K$ is finite.
The set $K$ is interpreted as a set of different colors, and each point $x\in X$
can be colored by one of the colors in $K$, or by an additional neutral color. 
In the initial position each set $P_k$ is colored by the color $k$, and
all other points are colored by the neutral color. Let $R_k:=P_k$ and $Q:=X\backslash (\bigcup_{k\in K}P_k)$.

\begin{figure}[t]
\begin{minipage}[t]{1\textwidth}
\begin{center}
\scalebox{0.65} {\includegraphics{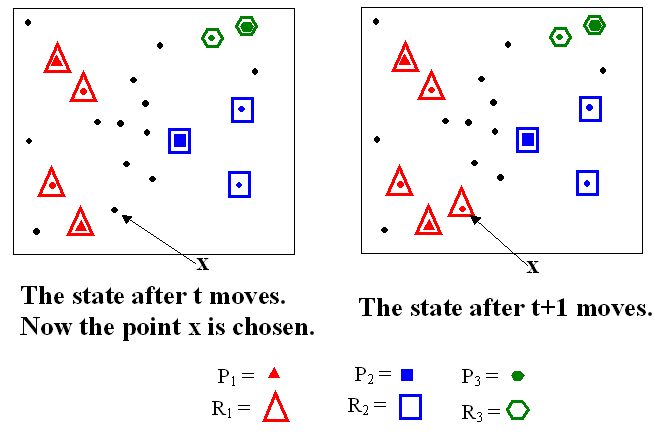}}
\end{center}
 \caption{Illustration of the game}
\label{fig:game}
\end{minipage}%
\end{figure}
Now the game starts. The player chooses a point $x\in Q$ and checks
its position. By this we mean that the player checks whether $x$
belongs to one of the sets $\dom(P_k,\bigcup_{j\neq k}R_j)$. If $x$
does not belong to any $\dom(P_k,\bigcup_{j\neq k}R_j)$, then $x$ is
colored by the neutral color. If it does, then it may happen that
$x$ belongs to several different such sets, corresponding to a
subset $K'\subseteq K$. In this case, if the current color of $x$ is
in $K'$, then the player does not change this color. Otherwise, the
player arbitrarily picks a color $k\in K'$, colors $x$ with it, and
adds $x$ to $R_k$, i.e., the player defines $R_k:=R_k\bigcup\{x\}$.
If the color of $x$ was changed, this means that before the change
either $x$ had a neutral color or it was in some $R_j$, but now it
does not belong to these $R_j$ anymore. In this case the player
removes $x$ from them, i.e., the player defines
$R_j:=R_j\backslash\{x\}$ for all such $j$. The game  continues in
the same manner with the other points in $Q$; see Figure
\ref{fig:game} in which $(X,d)$ is a finite subset of the Euclidean
plane and $|K|=3$.

The goal is to reach a stable configuration $(R_k)_{k\in K}$, i.e., a configuration in which nothing has to be done: for each $x\in Q$, the player does not need to color it by a new color after its position is checked.
A stable configuration is exactly a zone diagram, because
it means that both $\dom(P_k,\bigcup_{j\neq k}R_j)\subseteq R_k$
and $R_k\subseteq \dom(P_k,\bigcup_{j\neq k}R_j)$ for each $k$.
(If $x\in \dom(P_k,\bigcup_{j\neq k}R_j)\backslash R_k$ for a subset of
indices $K'\subseteq K$, then its color is not in $K'$, but it has to be
changed to some $k\in K'$,
and if $x\in R_k\backslash\dom(P_k,\bigcup_{j\neq k}R_j)$,
then either $x\in \dom(P_i,\bigcup_{j\neq i}R_j)$
for some $i\in K'\subseteq K\backslash\{k\}$
and then its color has to be changed to some $i\in K'$,
or $x\notin \dom(P_i,\bigcup_{j\neq i}R_j)$ for
all $i$, and then its color should be changed from $k$ to the neutral
color.)

It is definitely not clear in advance that a stable configuration can be obtained, because of the dynamical character of the game: when changing one $R_k$, by adding/removing a point to/from it, this may affect the other $R_i$, since now $\dom(P_i,\bigcup_{j\neq i}R_j)$ becomes smaller/larger. Hence, even if a point $x$ has already been colored, 
now it is possible that its color is not correct anymore and it will have to be updated. Hence, passing over all of $Q$ once will usually not suffice and it is not clear at all that even infinitely many passes will indeed suffice.

Theorem \ref{thm:n_is_2} ensures the existence of a stable configuration in the case where
 $|K|=2$ and $X,P_1,P_2$ are arbitrary. Using the algorithm described in Section \ref{sec:algorithm},
 such a configuration can be explicitly constructed. We note, however, that the algorithm does not
 show whether the player has a winning strategy, i.e., if and how the player can obtain a stable configuration according to the rules of the game.

\section{Existence}\label{sec:existence}
Our main tool for establishing the existence of a zone diagram of $2$ sites, and of a double zone diagram of any number (possibly infinite) of sites, in any $m$-space (no matter how bizarre the space $(X,d)$ or the sets $P_k$ are) is the Knaster-Tarski fixed point theorem \cite{Knaster,Tarski} which will be stated below.
 We need two definitions before formulating it.
\begin{defin}\label{def:monotone}
Let $Y$ be a nonempty set and suppose $\leq$ is a partial order on $Y$. A
mapping $\,g:Y\to Y$ is called monotone (or isotone, or increasing) if for each $A$ and $B$ in $Y$ the condition $A\leq B$ implies $g(A)\leq g(B)$. The mapping is called antimonotone if $A\leq B$ implies $g(B)\leq g(A)$.
\end{defin}
\begin{defin}\label{def:lattice}
Let $Y$ be a nonempty set and suppose $\leq$ is a partial order on $Y$.
The pair $(Y,\leq)$ is called a complete lattice if any subset $Z$ of $Y$ 
has a least upper bound $\,\bigvee Z \in Y$ and a greatest 
lower bound $\,\,\bigwedge Z \in Y$.
\end{defin}
\begin{thm}\label{thm:CSB}(\textbf{Knaster-Tarski}, \cite[Theorem 1]{Tarski})
Let $(Y,\leq)$ be a complete lattice and let $g:Y\to Y$ be monotone. Then
the set $F$ of all the fixed points of $g$ is nonempty and $(F,\leq)$ is a
complete lattice. In particular, $g$ has two fixed points $m$ and $M$ with the property that $m\leq \mu\leq M$ for any $\mu\in F$. Moreover,
 $m=\bigwedge \{y\in Y: g(y)\leq y\}$ and $M=\bigvee\{y\in Y: y\leq g(y)\}$.
\end{thm}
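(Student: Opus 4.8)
The plan is to prove the statement in two movements. First I would construct the least fixed point directly from the lattice structure; the greatest one then follows by an order-reversing symmetry, and the ``sandwich'' property $m\leq\mu\leq M$ will be immediate. Second --- and this is where the actual work lies --- I would bootstrap the construction of extremal fixed points to show that $(F,\leq)$ is itself a complete lattice.

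\textbf{First movement: the extremal fixed points.} Set $P=\{y\in Y: g(y)\leq y\}$ and let $m=\bigwedge P$, which exists because $(Y,\leq)$ is a complete lattice; note $P$ is nonempty, since the top element $\bigvee Y$ of $Y$ lies in $P$. For every $y\in P$ we have $m\leq y$, hence $g(m)\leq g(y)\leq y$ by monotonicity, so $g(m)$ is a lower bound for $P$ and therefore $g(m)\leq\bigwedge P=m$. Applying $g$ to $g(m)\leq m$ and using monotonicity gives $g(g(m))\leq g(m)$, i.e. $g(m)\in P$, whence $m\leq g(m)$. Thus $g(m)=m$, so $m\in F$ and $F\neq\emptyset$. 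Any fixed point $\mu$ satisfies $g(\mu)=\mu\leq\mu$, so $\mu\in P$ and hence $m\leq\mu$; thus $m$ is the least element of $F$. The order-dual argument applied to $Q=\{y\in Y: y\leq g(y)\}$ and $M=\bigvee Q$ produces a fixed point $M$ with $\mu\leq M$ for every $\mu\in F$. This establishes every assertion except the completeness of $(F,\leq)$.

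\textbf{Second movement: $F$ is a complete lattice.} Let $S\subseteq F$ be arbitrary and put $u=\bigvee S$, the join computed in $Y$; I claim $S$ has a least upper bound inside $F$. Consider the up-set $Y'=\{y\in Y: u\leq y\}$. It is a complete lattice under the induced order: for nonempty $Z\subseteq Y'$ both $\bigvee_Y Z$ and $\bigwedge_Y Z$ lie above $u$ and serve as the join and meet of $Z$ in $Y'$, while $u$ itself is the least element of $Y'$ and $\bigvee Y$ is its greatest. Moreover $g$ maps $Y'$ into itself: if $y\geq u\geq s$ for every $s\in S$, then $g(y)\geq g(s)=s$ for every $s\in S$ (monotonicity together with $s\in F$), so $g(y)\geq\bigvee S=u$. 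Hence $g|_{Y'}$ is a monotone self-map of the complete lattice $Y'$, and the first movement furnishes its least fixed point $m'\in Y'$. Then $m'$ is a fixed point of $g$ which is an upper bound of $S$; and if $\mu\in F$ is any upper bound of $S$, then $\mu\geq u$, so $\mu\in Y'$ is a fixed point of $g|_{Y'}$ and therefore $m'\leq\mu$. Thus $m'=\bigvee_F S$. By the order-dual argument, applied to the down-set $\{y\in Y: y\leq\bigwedge S\}$ and the greatest fixed point of the restriction of $g$ to it, $S$ also has a greatest lower bound in $F$. Since $S$ was arbitrary, $(F,\leq)$ is a complete lattice.

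\textbf{Main obstacle.} The first movement is a short unwinding of the definitions of $\bigwedge$, $\bigvee$ and monotonicity. The genuinely non-formal point is the second: the join and meet of a family of fixed points, computed in the ambient lattice $Y$, are typically \emph{not} fixed points, so $F$ does not inherit the lattice operations from $Y$. The remedy is to restrict $g$ to the sub-lattice of all elements lying above (respectively below) the $Y$-join (respectively $Y$-meet) of the family in question and to reapply the extremal-fixed-point construction there; the only things to check are that this sub-lattice is complete and $g$-invariant, both of which drop out immediately of monotonicity and the fact that the elements of $S$ are fixed.
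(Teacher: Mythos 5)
Your proof is correct and complete: the first movement is the classical extremal-fixed-point argument, and the second correctly handles the completeness of $(F,\leq)$ by restricting $g$ to the $g$-invariant complete sublattice $\{y\in Y: \bigvee S\leq y\}$ and reapplying the first movement. The paper does not prove this theorem at all --- it imports it from Tarski's paper --- and your argument is essentially Tarski's original one, so there is nothing to reconcile.
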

Both of the sets $\{y\in Y: g(y)\leq y\}$ and $\{y\in Y: y\leq g(y)\}$ are
nonempty
because $\bigvee Y$ belongs to the first and $\bigwedge Y$ to the
second.
It is interesting to note that the proof of 
Theorem \ref{thm:CSB} is elementary and is not
based on the axiom of choice. In order to apply Theorem \ref{thm:CSB} 
we need the following lemma. We note that 
Lemma \ref{lem:DomProperties}\eqref{item:antimonotone} 
generalizes \cite[Lemma 3(ii)]{AMT2}.
\begin{lem}\label{lem:DomProperties} Let $(X,d)$ be an $m$-space. We 
partially order $\mathcal{P}^*(X)$ by inclusion and let $(P_k)_{k \in K}$ 
be a tuple of nonempty subsets in $X$.
\begin{enumerate}[(a)]
\item\label{item:antimonotone} Given $P\in \mathcal{P}^{*}(X)$, the mapping $A \mapsto \textnormal{dom}(P,A)$ is antimonotone.
\item\label{item:Dom2Monotone} The $\Dom$ mapping is antimonotone with respect to
component-wise inclusion. The mapping $\Dom^2$ is monotone.
\item Given $P\in \mathcal{P}^{*}(X)$, the mapping  $A \mapsto \textnormal{dom}(A,P)$ is monotone.
\item\label{item:exmetric} $P\subseteq \dom(P,A)\subseteq X \quad \forall A,P\in \mathcal{P}^{*}(X)$.
\item\label{item:Y} Let
\begin{equation*}
Y_k=\{A\in \mathcal{P}^{*}(X): P_k\subseteq A\},\quad Y=\bigtimes_{k\in K} Y_k.
\end{equation*}
Then $\Dom$ maps $Y$ into $Y$, i.e., $\Dom(W)\in Y$ for any $W\in Y$.
\item Let $Y$ be as above and define on $Y$ the natural partial order $\subseteq$ as follows:
\begin{equation*}
(A_k)_{k\in K}\subseteq (B_k)_{k\in K} \Leftrightarrow A_k\subseteq B_k\quad \forall k\in K.
\end{equation*}
Then $(Y,\subseteq)$ is a complete lattice.
\end{enumerate}
\end{lem}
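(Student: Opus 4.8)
The plan is to establish the six items roughly in the order listed, since several of the later ones lean on the earlier ones. First I would prove (\ref{item:antimonotone}): if $A\subseteq B$, then for every $x\in X$ we have $d(x,A)=\inf\{d(x,y):y\in A\}\geq\inf\{d(x,y):y\in B\}=d(x,B)$, because we are taking an infimum over a larger set. Hence the condition $d(x,P)\leq d(x,A)$ is \emph{weaker} for larger $A$, so $\dom(P,A)\supseteq\dom(P,B)$; that is, $A\mapsto\dom(P,A)$ reverses inclusions. For item (c), the mapping $A\mapsto\dom(A,P)$: here $A\subseteq B$ gives $d(x,A)\geq d(x,B)$, so the condition $d(x,A)\leq d(x,P)$ implies $d(x,B)\leq d(x,A)\leq d(x,P)$, whence $\dom(A,P)\subseteq\dom(B,P)$, i.e.\ monotonicity.

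Next, item (\ref{item:exmetric}) is where the defining property \eqref{eq:exmetric} of an $m$-space enters: for $p\in P$ we have $d(p,P)=\inf\{d(p,y):y\in P\}\leq d(p,p)\leq d(p,A)$, the last inequality holding because $d(p,p)\leq d(p,a)$ for every $a\in A$ by \eqref{eq:exmetric}, so taking the infimum over $a\in A$ gives $d(p,p)\leq d(p,A)$. Thus $p\in\dom(P,A)$, proving $P\subseteq\dom(P,A)$; the inclusion $\dom(P,A)\subseteq X$ is trivial. Item (\ref{item:Y}) is then immediate: for $W=(W_k)_{k\in K}\in Y$ we have $P_k\subseteq W_k$ for each $k$, and by (\ref{item:exmetric}), $P_k\subseteq\dom(P_k,\bigcup_{j\neq k}W_j)=(\Dom W)_k$, so $\Dom W\in Y$. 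For item (\ref{item:Dom2Monotone}), antimonotonicity of $\Dom$ follows by combining (\ref{item:antimonotone}) with the observation that $R\subseteq S$ (component-wise) forces $\bigcup_{j\neq k}R_j\subseteq\bigcup_{j\neq k}S_j$ for each $k$, hence $(\Dom S)_k=\dom(P_k,\bigcup_{j\neq k}S_j)\subseteq\dom(P_k,\bigcup_{j\neq k}R_j)=(\Dom R)_k$; and $\Dom^2$ is monotone as the composition of two antimonotone maps (each reversal of inequalities being undone by the second application).

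The last item, that $(Y,\subseteq)$ is a complete lattice, is the one I expect to require the most care, though it is still routine. Given any family $\{(A^{(i)}_k)_{k\in K}\}_{i\in I}$ of elements of $Y$, the natural candidates for the join and meet are the component-wise union $(\bigcup_i A^{(i)}_k)_{k\in K}$ and the component-wise intersection $(\bigcap_i A^{(i)}_k)_{k\in K}$; for the empty family the join should be $(P_k)_{k\in K}$ (the bottom of $Y$) and the meet should be $(X)_{k\in K}$ (the top). The points to check are that these candidates genuinely lie in $Y$ — for the union this is clear since each $A^{(i)}_k\supseteq P_k$; for the intersection it holds precisely because every $A^{(i)}_k\supseteq P_k$, so $\bigcap_i A^{(i)}_k\supseteq P_k$ and in particular is nonempty, which is exactly why we restricted to sets containing $P_k$ rather than to all of $\mathcal{P}^*(X)$ — and that they are indeed the least upper bound and greatest lower bound with respect to component-wise inclusion, which is a direct verification from the definition of $\bigcup$ and $\bigcap$. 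The only subtlety, and the reason the restriction to $Y$ matters, is the nonemptiness of intersections; once that is handled the lattice axioms are automatic. This completes the proof. $\qed$
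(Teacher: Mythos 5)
Your proposal is correct and follows essentially the same route as the paper's own proof: the infimum-over-a-larger-set inequality for (a) and (c), the $m$-space axiom \eqref{eq:exmetric} for (d), composition of antimonotone maps for (b), and component-wise unions/intersections (with the empty-family bounds $(P_k)_{k\in K}$ and $(X)_{k\in K}$) for the complete lattice structure. Your extra remark on why the intersection stays nonempty inside $Y$ is a helpful clarification but does not change the argument.
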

\begin{proof}
\begin{enumerate}[(a)]
\item If $A\subseteq B$, then $d(x,B)\!=\!\inf\{d(x,y)\!:\! y\in B\}\!\leq\! \inf\{d(x,y)\!:\! y\in A\}\!=\!d(x,A)$, so
   $x\in \dom(P,B)$ implies $d(x,P)\leq d(x,B)\leq d(x,A)$, i.e.,
$x\in \dom(P,A)$.
\item If $R\subseteq S$, then $\bigcup_{j\neq k}R_j\subseteq \bigcup_{j\neq k}S_j$ for all $k\in K$, so
$\dom(P_k,\bigcup_{j\neq k}S_j)\subseteq \dom(P_k,\bigcup_{j\neq i}R_j)$ by
part (\ref{item:antimonotone}), and hence $\Dom(S)\subseteq\Dom(R)$. The second 
assertion follows from the first, since a composition of two antimonotone mappings is a monotone one.
\item  Suppose $A\subseteq B$. Then $d(x,B)\leq d(x,A)$ for all $x\in X$. Hence, if $x\in \dom(A,P)$, then  $d(x,B)\leq d(x,A)\leq d(x,P)$, i.e., $x\in \dom(B,P)$.
\item The second inclusion is obvious and the first one follows immediately from $d(x,P)\leq d(x,x)\leq d(x,y),\,\forall x\in P,y\in A$ by \eqref{eq:exmetric}.
\item The $k$-th component of $\Dom(W)$ contains $P_k$ by part \eqref{item:exmetric}.
\item Let $J\neq\emptyset$ be a subset of $Y$. Then the component-wise intersection $\bigcap_{R\in J}R$ is in $Y$, and it is the greatest lower bound of $J$, and $\bigcup_{R\in J}R$ is the least upper bound of $J$. In addition, $\bigwedge \emptyset=(X)_{k\in K}$ and $\bigvee \emptyset=(P_k)_{k\in K}$.
\end{enumerate}
\end{proof}

\begin{thm}\label{thm:DoubleZoneDiagram}
Let $(X,d)$ be an $m$-space and suppose that $(P_k)_{k\in K}$ is a
tuple of nonempty subsets of $X$. Then there is a double zone diagram
in $X$ with respect to $(P_k)_{k\in K}$. Furthermore, there are double zone diagrams $m$ and $M$ with the property that $m\subseteq \mu\subseteq M$ for any other double zone diagram $\mu$, and they satisfy $m=\Dom(M)$ and $M=\Dom(m)$.
\end{thm}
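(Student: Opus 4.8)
The plan is to apply the Knaster--Tarski fixed point theorem (Theorem \ref{thm:CSB}) to the mapping $\Dom^2$ on the complete lattice $(Y,\subseteq)$ introduced in Lemma \ref{lem:DomProperties}\eqref{item:Y}. By Lemma \ref{lem:DomProperties}\eqref{item:Dom2Monotone}, $\Dom^2$ is monotone, and by parts \eqref{item:Y} and the last item of the lemma, $\Dom$ (hence $\Dom^2$) maps $Y$ into $Y$ and $(Y,\subseteq)$ is a complete lattice. Therefore Theorem \ref{thm:CSB} applies directly and yields a nonempty set $F$ of fixed points of $\Dom^2$, together with a least element $m$ and a greatest element $M$ of $F$ satisfying $m\subseteq \mu\subseteq M$ for every $\mu\in F$.

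The only point that needs a word of care is the identification of ``fixed point of $\Dom^2$ in $Y$'' with ``double zone diagram.'' A double zone diagram is by definition a fixed point of $\Dom^2$ as a self-map of $\bigtimes_{k\in K}\mathcal{P}^*(X)$, with no constraint $P_k\subseteq R_k$ imposed. So I would argue that any fixed point $R$ of $\Dom^2$ (wherever it lives) must already lie in $Y$: indeed $R=\Dom^2 R=\Dom(\Dom R)$, and by Lemma \ref{lem:DomProperties}\eqref{item:exmetric} every component of anything in the image of $\Dom$ contains the corresponding $P_k$. Hence the fixed points of $\Dom^2$ on $Y$ are exactly all the double zone diagrams, so the extremal fixed points $m,M\in F$ furnished by Knaster--Tarski are genuinely extremal among all double zone diagrams, giving both assertions of the theorem.

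I do not anticipate a real obstacle here: essentially all the work has been front-loaded into Lemma \ref{lem:DomProperties}, and the proof is a two-line invocation of Theorem \ref{thm:CSB} plus the observation about $Y$ just mentioned. If anything, the mild subtlety is purely bookkeeping — making sure the ambient poset on which one runs Knaster--Tarski ($Y$) is reconciled with the ambient poset in the definition of a double zone diagram ($\bigtimes_k \mathcal{P}^*(X)$) — and this is dispatched by the image-of-$\Dom$ remark.

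\begin{proof}
By Lemma \ref{lem:DomProperties}\eqref{item:Y}, $\Dom$ maps $Y$ to $Y$, and hence so does $\Dom^2$. By the last part of Lemma \ref{lem:DomProperties}, $(Y,\subseteq)$ is a complete lattice, and by Lemma \ref{lem:DomProperties}\eqref{item:Dom2Monotone}, $\Dom^2:Y\to Y$ is monotone. Theorem \ref{thm:CSB} therefore implies that the set $F$ of fixed points of $\Dom^2$ in $Y$ is nonempty and contains elements $m$ and $M$ with $m\subseteq\mu\subseteq M$ for every $\mu\in F$.

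It remains to check that $F$ consists of \emph{all} double zone diagrams in $X$ with respect to $(P_k)_{k\in K}$, i.e., of all fixed points of $\Dom^2$ as a self-map of $\bigtimes_{k\in K}\mathcal{P}^*(X)$. Clearly every element of $F$ is such a fixed point. Conversely, if $R$ is any fixed point of $\Dom^2$, then $R=\Dom(\Dom R)$ lies in the image of $\Dom$, so by Lemma \ref{lem:DomProperties}\eqref{item:exmetric} each of its components contains the corresponding $P_k$; thus $R\in Y$ and hence $R\in F$. Consequently $m$ and $M$ are double zone diagrams, and every double zone diagram $\mu$ satisfies $m\subseteq\mu\subseteq M$.
\end{proof}
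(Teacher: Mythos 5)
Your proof is correct and follows essentially the same route as the paper: apply the Knaster--Tarski theorem (Theorem \ref{thm:CSB}) to the monotone map $\Dom^2$ on the complete lattice $(Y,\subseteq)$ from Lemma \ref{lem:DomProperties}. Your extra observation that every fixed point of $\Dom^2$ automatically lies in $Y$ (because it is in the image of $\Dom$ and hence contains the $P_k$ componentwise) is a worthwhile bit of care that the paper leaves implicit, and it correctly justifies that $m$ and $M$ are extremal among \emph{all} double zone diagrams.
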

\begin{proof}
Because of Lemma \ref{lem:DomProperties}, the conditions of Theorem \ref{thm:CSB} are satisfied with $(Y,\subseteq)$ and $g=\Dom^2$. Hence $\Dom^2$ has two (not necessarily different) fixed points $m$ and $M$  which are double zone diagrams by definition, and they  have  the property $m\subseteq \mu\subseteq M$ for any other double zone diagram $\mu$. Finally, since $\Dom(\mu)$ is a double zone diagram for any double zone diagram $\mu$  (because $\Dom^2(\Dom(\mu))=\Dom(\Dom^2(\mu))=\Dom(\mu)$), it follows that $m\subseteq  \Dom(M)$, so by antimonotonicity, $\Dom(m)\supseteq \Dom^2(M)=M$. Therefore $M=\Dom(m)$, and in the same way $m=\Dom(M)$.
\end{proof}

\begin{thm}\label{thm:n_is_2}
Let $(X,d)$ be an $m$-space and let $P_1,P_2\in \mathcal{P}^{*}(X)$. Then there exists a zone diagram 
in $X$ with respect to $(P_1,P_2)$
\end{thm}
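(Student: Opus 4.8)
The plan is to reduce the existence of an order-2 zone diagram to the existence of a double zone diagram, which is already guaranteed by Theorem \ref{thm:DoubleZoneDiagram}. The key observation is that when $|K|=2$, the $\Dom$ mapping has a special structure: writing $R=(R_1,R_2)$, we have $\Dom(R_1,R_2)=(\dom(P_1,R_2),\dom(P_2,R_1))$, so the two coordinates are "decoupled" in the sense that the first output depends only on the second input and vice versa. Consequently, if $(R_1,R_2)$ is a fixed point of $\Dom^2$, then $\Dom^2(R_1,R_2)=(\dom(P_1,\dom(P_2,R_1)),\dom(P_2,\dom(P_1,R_2)))$, and the fixed point equations become $R_1=\dom(P_1,\dom(P_2,R_1))$ and $R_2=\dom(P_2,\dom(P_1,R_2))$ — two \emph{independent} equations.

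First I would invoke Theorem \ref{thm:DoubleZoneDiagram} to obtain \emph{some} double zone diagram, but actually it is cleaner to work directly: by Lemma \ref{lem:DomProperties}\eqref{item:Dom2Monotone} the map $\Dom^2$ is monotone on the complete lattice $(Y,\subseteq)$ of Lemma \ref{lem:DomProperties}\eqref{item:Y}, so by Knaster--Tarski (Theorem \ref{thm:CSB}) it has a least fixed point $m=(m_1,m_2)$ and a greatest fixed point $M=(M_1,M_2)$. Now I would apply $\Dom$ to a double zone diagram $R=(R_1,R_2)$: since $\Dom^2(\Dom R)=\Dom(\Dom^2 R)=\Dom R$, the tuple $\Dom R$ is again a double zone diagram. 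The second step is to pick the double zone diagram cleverly so that $\Dom R = R$. Using the decoupling, set $Z_1:=m_1$ (the first component of the least fixed point of $\Dom^2$) and $Z_2:=\dom(P_2,m_1)$. Then I claim $Z:=(Z_1,Z_2)$ is a zone diagram: indeed $\dom(P_1,Z_2)=\dom(P_1,\dom(P_2,m_1))=m_1=Z_1$ because $m_1$ satisfies the first fixed-point equation above, and $\dom(P_2,Z_1)=\dom(P_2,m_1)=Z_2$ by definition. One must also check $Z_1,Z_2$ are nonempty, which follows from Lemma \ref{lem:DomProperties}\eqref{item:exmetric} since $P_1\subseteq Z_1$ and $P_2\subseteq Z_2$, and that $Z_1=m_1$ indeed satisfies $m_1=\dom(P_1,\dom(P_2,m_1))$, which is exactly the first coordinate of the equation $\Dom^2 m = m$.

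The main obstacle — really the only subtlety — is verifying that the first coordinate of a fixed point of $\Dom^2$ genuinely satisfies the one-variable equation $m_1 = \dom(P_1,\dom(P_2,m_1))$ in isolation, i.e., that the coordinate-wise decoupling is legitimate. This is where the hypothesis $|K|=2$ is essential: for $|K|=2$ the index set $\{j\neq k\}$ is a singleton, so $\bigcup_{j\neq k}R_j$ is just the single set $R_{3-k}$, and no genuine coupling through unions occurs. I would spell this out carefully, since for $|K|\geq 3$ the analogous reduction fails (this is presumably why Theorem \ref{thm:DoubleZoneDiagram} only gives a double zone diagram in general). Everything else is a direct substitution using Lemma \ref{lem:DomProperties} and Theorem \ref{thm:CSB}; no topological, metric, or analytic input is needed, in keeping with the paper's order-theoretic approach.
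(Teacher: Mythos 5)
Your proposal is correct and is essentially the paper's own argument: the paper likewise takes a fixed point $S=(S_1,S_2)$ of $\Dom^2$ (from Theorem \ref{thm:DoubleZoneDiagram}), notes that for $|K|=2$ the equation decouples to $S_1=\dom(P_1,\dom(P_2,S_1))$, and sets $R_1:=S_1$, $R_2:=\dom(P_2,R_1)$ — exactly your construction with $m_1$ replaced by an arbitrary fixed point. The only cosmetic difference is your use of the least fixed point $m$, which is not needed.
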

\begin{proof}
Let $S=(S_1,S_2)$ be a fixed point of $\Dom^2$, the existence of which is ensured by Theorem \ref{thm:DoubleZoneDiagram}. We have
\begin{equation*}
(S_1,S_2)=\Dom^2(S_1,S_2)=(\dom(P_1,\dom(P_2,S_1)), \dom(P_2,\dom(P_1,S_2))).
\end{equation*}
Let $R_1:=S_1,\,R_2:=\dom(P_2,R_1)$. Then 
$R_1=S_1=\dom(P_1,\dom(P_2,S_1))=\dom(P_1,R_2)$, and hence $R=(R_1,R_2)$ 
is a zone diagram in $X$.
\end{proof}
\begin{rem}\label{rem:trisector} By a simple argument (repeating 
word by word the proof of \cite[Lemma 3(i),(iii)]{AMT2} and using the fact 
that the 
distance between a point and a nonempty, closed and convex subset of a Hilbert space is attained),
it can be shown that if $p_k,\, k\in K=\{1,2\}$ are two different
points in a Hilbert space $X$ and if $(R_1,R_2)$ is a zone diagram
in $X$, then the boundaries $C_k$ of $R_k$ represent the components
of a trisector with respect to $P_k=\{p_k\}$, i.e., $C_k=\{x\in X:
d(x,P_k)=d(x,C_j\}$ for $k\neq j\in K$. This conclusion extends the
existence part of \cite[Theorem 1]{AMT2}. Actually, by different
arguments this fact can be generalized to other spaces and to
more general sets $P_k$. 
\end{rem}
\section{Uniqueness}\label{sec:uniqueness}
As the examples given in Sections \ref{sec:examples} and
\ref{sec:exmetric} show, a zone diagram is not unique in general. In
spite of this, it is possible to formulate several
necessary and sufficient conditions for uniqueness. We first state and prove a general uniqueness theorem for antimonotone mappings.
\begin{thm}\label{thm:AntiMonotoneUnique}
Let $(Y,\leq)$ be a partially ordered set, and let $T:Y\to Y$ be antimonotone. If $\,T^2$ has fixed points
$m$ and $M$ with the property that $m\leq \mu\leq M$ for any other fixed point $\mu$ of $\,T^2$,
then the following conditions are equivalent and each of them
suffices for $T$ to have exactly one fixed point.
\begin{enumerate}[(a)]
\item\label{item:mM} $m=M$.
\item\label{item:T2} $T^2$ has a unique fixed point.
\item\label{item:T2inT1} Any fixed point of $T^2$ is a fixed point of $T$.
\item\label{item:fixT2_is_fixT1} The fixed point sets of $T$ and $T^2$ coincide.
\item\label{item:mM_fix_T}  Either $m$ or $M$ is a fixed point of $T$.
\end{enumerate}
If, in addition, $(Y,\leq)$ is a complete lattice, then all these conditions are equivalent to the following one:
\begin{equation}\tag{\emph{f}}\label{item:AB}
 A\leq B\,\, \textrm{for any} \,\,A,B\in Y\,\, \textrm{which satisfy}\,\,\, T^2(B)\leq B\,\, \textrm{and}\,\, A\leq T^2(A).
\end{equation}
\end{thm}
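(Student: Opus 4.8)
The plan is to prove the chain of equivalences cyclically and then deal with condition (f) separately. The implications \eqref{item:mM}$\Rightarrow$\eqref{item:T2}$\Rightarrow$\eqref{item:T2inT1} are almost immediate: if $m=M$ then every fixed point $\mu$ of $T^2$ is squeezed between them and equals the common value, giving uniqueness; and uniqueness of the fixed point of $T^2$ trivially forces every such fixed point to coincide with (hence to be a fixed point of) $T$ once we know $T$ has a fixed point among them — so the real content of \eqref{item:T2}$\Rightarrow$\eqref{item:T2inT1} is just to observe that $m$ (equivalently $M$) is a fixed point of $T$, which I explain next. The key structural fact, used repeatedly, is that $T$ swaps $m$ and $M$: since $T$ is antimonotone and $m\leq M$, applying $T$ gives $T(M)\leq T(m)$; applying $T$ again gives $T^2(m)\leq T^2(M)$, i.e.\ $m\leq M$, consistently. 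More to the point, $T(m)$ and $T(M)$ are themselves fixed points of $T^2$ (because $T^2(T(m)) = T(T^2(m)) = T(m)$), so $m \leq T(m) \leq M$ and $m \leq T(M) \leq M$; and from $m\leq M$ antimonotonicity gives $T(M)\leq T(m)$, while applying $T$ to $m\le T(m)\le M$ gives $T(M)\le T^2(m) = m$ and $M = T^2(M) \le T(T(M))$... the upshot I want is the pair of inequalities $T(M)\leq m$ and $M\leq T(m)$, which combined with $m\leq T(m)$, $T(M)\leq M$ yield $T(m)=M$ and $T(M)=m$. This immediately gives \eqref{item:mM}$\Leftrightarrow$\eqref{item:mM_fix_T}: if $m=M$ then $T(m)=M=m$, and conversely if $m$ is a fixed point of $T$ then $m = T(m) = M$.

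With the swap $T(m)=M$, $T(M)=m$ in hand, the remaining implications fall out quickly. For \eqref{item:T2inT1}$\Rightarrow$\eqref{item:fixT2_is_fixT1} there is nothing to do since any fixed point of $T$ is automatically a fixed point of $T^2$. For \eqref{item:fixT2_is_fixT1}$\Rightarrow$\eqref{item:mM}: $m$ is a fixed point of $T^2$, hence by hypothesis a fixed point of $T$, hence $m = T(m) = M$. For \eqref{item:mM_fix_T}$\Rightarrow$\eqref{item:mM} we already did it above. And to close the loop cleanly, \eqref{item:T2inT1}$\Rightarrow$\eqref{item:mM_fix_T} is immediate ($m$ is a fixed point of $T^2$, hence of $T$). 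Thus (a)--(e) are all equivalent. That each one forces $T$ to have exactly one fixed point: any fixed point $x$ of $T$ is a fixed point of $T^2$, so $m\leq x\leq M$; when $m=M$ this pins down $x$ uniquely, and existence is guaranteed since $m$ itself is then a fixed point of $T$.

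Finally, the equivalence with \eqref{item:AB} under the completeness hypothesis. One direction is easy: if $m=M$, and $A,B$ satisfy $T^2(B)\leq B$ and $A\leq T^2(A)$, then by the Knaster--Tarski description in Theorem \ref{thm:CSB} applied to the monotone map $T^2$ we have $A\leq M$ and $m\leq B$ (the least prefixed point lies below every postfixed point — more precisely $M=\bigvee\{y: y\leq T^2(y)\}\geq A$ and $m=\bigwedge\{y: T^2(y)\leq y\}\leq B$), so $A\leq M = m\leq B$. Conversely, assume \eqref{item:AB}; take $A=M$ and $B=m$, which satisfy the hypotheses of \eqref{item:AB} since $m,M$ are fixed points of $T^2$; then $M\leq m$, and together with $m\leq M$ this gives $m=M$, i.e.\ \eqref{item:mM}. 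The main obstacle, and the step deserving the most care, is establishing the swap identities $T(m)=M$, $T(M)=m$ rigorously — everything else is a short formal deduction — and in particular making sure I only use the \emph{extremality} of $m$ and $M$ among fixed points of $T^2$ (which is what the hypothesis grants) rather than the stronger Knaster--Tarski characterization, except in the final part where completeness is explicitly assumed.
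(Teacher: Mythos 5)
Your proof is correct. The overall architecture matches the paper's --- a cycle of implications among (a)--(e), with (f) handled separately via the Knaster--Tarski description of the extremal fixed points of the monotone map $T^2$ --- but the engine is different. The paper uses antimonotonicity exactly once, in the single implication (e)$\Rightarrow$(a) (namely $m = Tm \geq TM = M$), closes the cycle as (a)$\Rightarrow$(b)$\Rightarrow$(c)$\Rightarrow$(d)$\Rightarrow$(e)$\Rightarrow$(a), and proves (b)$\Rightarrow$(c) by the short observation that $T\mu$ is a fixed point of $T^2$ whenever $\mu$ is, so uniqueness forces $T\mu=\mu$. You instead front-load a stronger structural lemma --- the swap identities $T(m)=M$ and $T(M)=m$ --- derived correctly from the facts that $T(m)$ and $T(M)$ are fixed points of $T^2$ (hence trapped between $m$ and $M$) together with antimonotonicity, and you are right that only the extremality hypothesis is needed for this, not completeness. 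This buys you more direct implications ((d)$\Rightarrow$(a) in one line, (a)$\Leftrightarrow$(e) immediately) plus the extra information that $T$ permutes the extremal double fixed points, at the cost of a heavier setup; the paper's route is more economical. One cosmetic remark: the middle of your derivation of the swap wanders (the inequality $M=T^2(M)\le T(T(M))$ is a trivial equality and plays no role), but the two inequalities you actually need, $T(M)\le m$ and $M\le T(m)$, do follow by applying $T$ to $T(m)\le M$ and to $m\le T(M)$ respectively, so the argument stands. The treatment of (f) is identical to the paper's.
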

\begin{proof}
\textbf{\eqref{item:mM}$\Rightarrow$\eqref{item:T2}:} If $\mu=T^2\mu$, then by assumption $m\leq \mu\leq M=m$, i.e., $\mu=m=M$ is the unique fixed point of $T^2$.\vspace{0.2cm}\\
\eqref{item:T2}$\Rightarrow$\eqref{item:T2inT1}: If $\mu=T^2\mu$, then $T\mu=T^3\mu=T^2(T\mu)$, i.e., $T\mu$ is a fixed point of $T^2$, so by uniqueness, $T\mu=\mu$.\vspace{0.2cm}\\
\eqref{item:T2inT1}$\Rightarrow$\eqref{item:fixT2_is_fixT1}: One inclusion holds by assumption, and the other holds in general, since if $\mu=T\mu$, then $\mu=T\mu=T^2\mu$, so $\mu$ is also a fixed point of $T^2$.\vspace{0.2cm}\\
\eqref{item:fixT2_is_fixT1}$\Rightarrow$\eqref{item:mM_fix_T}: 
obvious.\vspace{0.2cm}\\
\eqref{item:mM_fix_T}$\Rightarrow$\eqref{item:mM}: 
Suppose, for example, that $TM = M$. Since $m \le M$, the antimonotonicity
of $T$ implies that $Tm \geq TM = M$. Since $Tm$ is a fixed point of 
$T^2$, it follows that $Tm = M$. Hence $m = T^2m = TM = M$, that is, 
$m = M$. \vspace{0.2cm}\\
\eqref{item:fixT2_is_fixT1},\eqref{item:T2}$\Rightarrow$ $T$ has 
a unique fixed point: obvious.\vspace{0.2cm}

Finally, we will show that \eqref{item:mM} and \eqref{item:AB} are equivalent:\vspace{0.2cm}\\
\eqref{item:AB}$\Rightarrow$\eqref{item:mM} Since $M\leq T^2M$ and $T^2m\leq m$, it follows that $M\leq m$, but $m\leq M$ and hence there is equality.\vspace{0.1cm}\\
\eqref{item:mM}$\Rightarrow$\eqref{item:AB} 
Suppose $A,B\in Y$ satisfy $T^2(B)\leq B$ and $A\leq T^2(A)$.
Since the pair $(Y,\leq)$ is a complete lattice, the Knaster-Tarski fixed
point theorem implies that  $m=\bigwedge\{y\in Y: T^2(y)\leq y\}$ and $M=\bigvee\{y\in Y: y\leq T^2(y)\}$. Hence $A\leq M=m\leq B$.

\end{proof}
Since by Theorem \ref{thm:DoubleZoneDiagram} the $\Dom$ mapping satisfies the conditions of Theorem \ref{thm:AntiMonotoneUnique}, we obtain several equivalent sufficient conditions for the uniqueness of zone diagrams. This again shows the importance of the concept of a double zone diagram. In fact, because of the special structure of the Dom mapping we can get a stronger result which will be formulated as a special corollary below. Before formulating this corollary, we  note that, in fact, the implication \eqref{item:T2}$\Rightarrow$ ``$T$ has a unique fixed point'' is true without any assumption on $T$ and $X$ and without the partial order.

\begin{cor}\label{cor:unique_n2}
Let $(X,d)$ be an $m$-space, $P = (P_k)_{k \in K}$ a tuple of nonempty 
sets in $X$, and let $T=\Dom$. Then each one of the six conditions in
Theorem \ref{thm:AntiMonotoneUnique} suffices for $X$ to have
exactly one zone diagram with respect to $P$. If, in addition, $K = 
\{1,2\}$, then these conditions are also necessary.
\end{cor}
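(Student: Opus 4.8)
The plan is to split the statement into two parts. The first assertion—that each of the six conditions in Theorem \ref{thm:AntiMonotoneUnique} suffices for uniqueness—is essentially immediate: by Theorem \ref{thm:DoubleZoneDiagram}, the mapping $T=\Dom$ is antimonotone on the complete lattice $(Y,\subseteq)$ (Lemma \ref{lem:DomProperties}), and $\Dom^2$ has the extremal fixed points $m$ and $M$ with $m\subseteq\mu\subseteq M$ for every fixed point $\mu$ of $\Dom^2$. Thus the hypotheses of Theorem \ref{thm:AntiMonotoneUnique} hold verbatim, and each of its conditions (a)–(f) forces $T=\Dom$ to have exactly one fixed point, i.e.\ exactly one zone diagram. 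One should note that a priori ``$\Dom$ has a unique fixed point'' only means a unique zone diagram in $Y$, so I would first observe (as the text does right before the corollary, and using Lemma \ref{lem:DomProperties}\eqref{item:exmetric},\eqref{item:Y}) that every zone diagram automatically lies in $Y$, so uniqueness in $Y$ is genuine uniqueness.

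For the converse direction, assume $n=|K|=2$ and that $X$ has a unique zone diagram $R=(R_1,R_2)$; I must deduce that the six conditions hold, equivalently (by Theorem \ref{thm:AntiMonotoneUnique}) that $m=M$. The natural route is to show condition \eqref{item:T2inT1}: every fixed point of $\Dom^2$ is a fixed point of $\Dom$. So let $S=(S_1,S_2)$ satisfy $\Dom^2 S=S$; I want $\Dom S=S$, which by uniqueness of the zone diagram will then pin everything down. Writing out $\Dom^2$ componentwise as in the proof of Theorem \ref{thm:n_is_2}, we have $S_1=\dom(P_1,\dom(P_2,S_1))$ and $S_2=\dom(P_2,\dom(P_1,S_2))$. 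The construction in Theorem \ref{thm:n_is_2} produces from $S$ a genuine zone diagram $R=(R_1,R_2)$ with $R_1=S_1$ and $R_2=\dom(P_2,S_1)$; but symmetrically, setting $R_2':=S_2$ and $R_1':=\dom(P_1,S_2)$ yields another zone diagram $R'=(R_1',R_2')$. By uniqueness $R=R'$, so $S_1=R_1=R_1'=\dom(P_1,S_2)$ and $S_2=R_2'=R_2=\dom(P_2,S_1)$. Hence $\Dom S=(\dom(P_1,S_2),\dom(P_2,S_1))=(S_1,S_2)=S$, establishing \eqref{item:T2inT1} and therefore all six conditions.

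The main obstacle is really just the observation that in the $n=2$ case a fixed point of $\Dom^2$ canonically produces \emph{two} zone diagrams (one anchored on each coordinate), and that these must coincide under the uniqueness hypothesis; once this symmetry is spelled out, the rest is bookkeeping. I would also double-check the edge case where the zone diagram is unique but one wants condition \eqref{item:mM} directly: since $m$ and $M$ are double zone diagrams, the argument above applied to each of them shows $m$ and $M$ are in fact zone diagrams, hence equal by uniqueness—this is exactly implication \eqref{item:mM_fix_T}$\Rightarrow$\eqref{item:mM} of Theorem \ref{thm:AntiMonotoneUnique}, so nothing new is needed. Finally I would remark that the restriction $n=2$ is genuinely used: for larger $n$ the passage from a fixed point of $\Dom^2$ to a zone diagram is not available in this clean form, which is consistent with Example \ref{ex:simple} showing double zone diagrams that are not zone diagrams.
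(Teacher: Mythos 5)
Your proof is correct, and for the converse direction it takes a genuinely different (and in fact shorter) route than the paper. The paper also reduces the problem to verifying condition (c) of Theorem \ref{thm:AntiMonotoneUnique}, but it does so by a second application of the Knaster--Tarski theorem: it introduces the one-variable compositions $g=T_1\circ T_2$ and $h=T_2\circ T_1$ on the complete lattice $Y_1=\{A:P_1\subseteq A\}$, extracts the least and greatest fixed points $m_1,M_1$ of $g$, turns each into a zone diagram via $m_2:=T_2m_1$, $M_2:=T_2M_1$, concludes $m_1=M_1$ from uniqueness, and only then deduces that an arbitrary fixed point $Z$ of $\Dom^2$ must equal $(m_1,m_2)$. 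You instead exploit the left--right symmetry of the construction in Theorem \ref{thm:n_is_2}: from a single fixed point $S=(S_1,S_2)$ of $\Dom^2$ you manufacture the two zone diagrams $(S_1,\dom(P_2,S_1))$ and $(\dom(P_1,S_2),S_2)$, and uniqueness forces them to coincide, which reads off exactly as $\Dom S=S$. This avoids any further appeal to the lattice structure and makes it transparent where $n=2$ is used (the decoupling of $\Dom^2$ into two one-variable equations); the paper's version, in exchange, exhibits the extremal structure of the fixed-point set of $g$, which is of some independent interest. Your preliminary remark that every zone diagram automatically lies in $Y$ (via Lemma \ref{lem:DomProperties}\eqref{item:exmetric}) is a worthwhile point that the paper leaves implicit.
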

\begin{proof}
In view of Theorem \ref{thm:AntiMonotoneUnique} and the above discussion, 
only the last assertion remains to be proven.
So suppose that $K = \{1,2\}$ and that $T$ has a unique fixed
point. We will show that part \eqref{item:T2inT1} of 
Theorem \ref{thm:AntiMonotoneUnique} holds. 
To this end, let $Z=(Z_1,Z_2)$ be any fixed point of $T^2$. Then
\begin{equation*}
Z_1=\dom(P_1,\dom(P_2,Z_1))=g(Z_1),\quad
Z_2=\dom(P_2,\dom(P_1,Z_2))=h(Z_2),
\end{equation*}
 where  $T_k(A)=\dom(P_k,A)$, $k = 1, 2$, $g=T_1\circ T_2$ and $h=T_2\circ 
T_1$. Let
$Y_1=\{A\in \mathcal{P}^{*}(X): P_1\subseteq A\}$.
By Lemma \ref{lem:DomProperties}, $g$ is a monotone mapping which 
maps $Y_1$ into itself and $Y_1$ is a complete lattice. Therefore Theorem 
\ref{thm:CSB} implies that
$g$ has a least and a greatest fixed points $m_1$ and $M_1$, respectively. 
By defining $m_2:=T_2m_1$ and $M_2:=T_2M_1$, we obtain
that $(m_1,m_2)$ and $(M_1,M_2)$ are fixed points of $T$ (since, for instance,  $m_1=T_1(T_2m_1)=T_1m_2$), so by
uniqueness $m_1=M_1$. 
 This implies that $g$ has a
unique fixed point in $Y_1$, so $Z_1=m_1=M_1$. In the same way, $h$ has a unique
 fixed point, and since both $Z_2$ and $m_2$ are fixed points of $h$, they must coincide, i.e., $Z=(Z_1,Z_2)=(m_1,m_2)$ is a fixed point of $T$.
\end{proof}
To illustrate an application of this corollary, let $X=[-3,3]$ with 
$d(x,y)=|x-y|$, and let $P=(P_1,P_2)=(\{-3\},\{3\})$.
 For $t\in \N$ define $R^{t}=\Dom^{t}(P)$. A short calculation shows that
 $\Dom^2 (X,X)= \Dom P=([-3,0],[0,3])$.  More generally, 
we obtain $\Dom^{t}P=([-3, 
a_{t}],[b_{t},3])$, where  $a_0=-3,b_0=3$ and $a_{t+1}=(b_{t}-3)/2,\,b_{t+1}=(a_{t}+3)/2$. By elementary considerations,  $\{a_{2t}\}$ increases to -1, $\{b_{2t}\}$ decreases to 1, $\{a_{2t+1}\}$ decreases to  -1 and $\{b_{2t+1}\}$ increases to 1. Hence  $\bigcup_{t=0}^{\infty}\Dom^{2t}P$ increases to $([-3,-1),(1,3])$, and  $\bigcap_{t=0}^{\infty}\Dom^{2t+1}P$ decreases to $([-3,-1],[1,3])$. Since by Lemma \ref{lem:DomProperties}\eqref{item:Y}, a double zone diagram $R=(R_1,R_2)$ satisfies $P\subseteq R\subseteq X$, by repeated iterations we obtain $\bigcup_{t=0}^{\infty}\Dom^{2t}P\subseteq R \subseteq
 \bigcap_{t=0}^{\infty}\Dom^{2t}(X,X)=\bigcap_{t=0}^{\infty}\Dom^{2t+1}P$, and using the fact that $R_1,R_2$ are obviously closed we
  have that $R=([-3,-1],[1,3])$. This indeed proves the uniqueness of the 
double zone diagram, and hence, by Corollary \ref{cor:unique_n2}, of the 
zone diagram. A similar 
consideration shows the uniqueness of the zone diagram in 
Example \ref{ex:rings}.

\section{The finite case}\label{sec:algorithm}
In this short section we describe a practical way of constructing a double zone diagram of arbitrary number of sites 
and a zone diagram of 2 sites in the particular case where $X$ is a finite set.
In both cases the construction is by iteration, and we give some estimates on the number
of required iterations.
\begin{thm}\label{thm:FiniteZone}
Let $(X,d)$ be a finite $m$-space and let $(P_1,P_2)\in
\mathcal{P}^{*}(X)\times \mathcal{P}^{*}(X)$. 
Let $g_1=T_1\circ T_2$ and $g_2=T_2\circ T_1$, where
$T_k(A)=\dom(P_k,A)$. Then there are nonnegative integers $n_1,N_1
\leq |X|-|P_1|$ and $n_2,N_2\leq |X|-|P_2|$  such that 
\begin{equation*}
\begin{array}{ll}
   R=({g_1}^{n_1}(P_1),T_2(g^{n_1}(P_1)),  & S=({g_1}^{N_1}(X),T_2(g_1^{N_1}(X)), \\
   Z=(T_1({g_2}^{n_2}(P_2)),{g_2}^{n_2}(P_2)), & W=(T_1({g_2}^{N_2}(X)),{g_2}^{N_2}(X))
\end{array}
\end{equation*}
are all zone diagrams in $X$.
\end{thm}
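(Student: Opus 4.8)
The plan is to exploit the fact that on a finite set $X$, the partially ordered set $Y_1 = \{A \in \mathcal{P}^*(X) : P_1 \subseteq A\}$ is a finite complete lattice, so any monotone self-map of it stabilizes after finitely many iterations when started from the bottom or top element. First I would recall from Lemma \ref{lem:DomProperties} that $g_1 = T_1 \circ T_2$ is monotone (being a composition of two antimonotone maps) and maps $Y_1$ into itself, since $T_2(A) \supseteq P_2$ is always nonempty and $T_1$ of anything contains $P_1$ by part \eqref{item:exmetric}. The bottom of $Y_1$ is $P_1$ and the top is $X$. Now consider the sequence $g_1^t(P_1)$. One checks $P_1 \subseteq T_1(T_2(P_1)) = g_1(P_1)$ again by \eqref{item:exmetric}, and then monotonicity of $g_1$ propagates this: $g_1^t(P_1) \subseteq g_1^{t+1}(P_1)$ for all $t$, so this is an increasing chain in the finite lattice $Y_1$. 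Hence it stabilizes: there is a smallest $n_1$ with $g_1^{n_1}(P_1) = g_1^{n_1+1}(P_1)$, i.e. $g_1^{n_1}(P_1)$ is a fixed point of $g_1$. Since the chain strictly increases up to step $n_1$ and each strict inclusion in $\mathcal{P}^*(X)$ adds at least one element, and $g_1^{n_1}(P_1) \subseteq X$, we get $n_1 \leq |X| - |P_1|$. Symmetrically, $g_1^t(X)$ is a \emph{decreasing} chain (because $g_1(X) \subseteq X$ and monotonicity propagates), stabilizing at some $N_1 \leq |X| - |P_1|$ with $g_1^{N_1}(X)$ a fixed point of $g_1$.

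Next I would promote these fixed points of $g_1$ to fixed points of $\Dom$, reusing exactly the device from the proof of Theorem \ref{thm:n_is_2} and Corollary \ref{cor:unique_n2}. Write $A := g_1^{n_1}(P_1)$, so $A = T_1(T_2(A)) = \dom(P_1, \dom(P_2, A))$, and set $R_1 := A$, $R_2 := T_2(R_1) = \dom(P_2, R_1)$. Then $R_1 = \dom(P_1, \dom(P_2, A)) = \dom(P_1, R_2)$, and $R_2 = \dom(P_2, R_1)$ by definition, so $R = (R_1, R_2)$ satisfies both zone-diagram equations; it is a zone diagram in $X$, and $R_1, R_2$ are nonempty since they contain $P_1, P_2$ respectively. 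The same argument with $A = g_1^{N_1}(X)$ gives the zone diagram $S$, and the symmetric argument with $g_2 = T_2 \circ T_1$ on $Y_2 = \{A : P_2 \subseteq A\}$, starting from $P_2$ and from $X$, gives $Z$ and $W$ with $n_2, N_2 \leq |X| - |P_2|$.

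I anticipate no serious obstacle here; the content is essentially bookkeeping. The one point requiring a little care is the chain-monotonicity argument: establishing $P_1 \subseteq g_1(P_1)$ and $g_1(X) \subseteq X$ as the base cases (immediate from \eqref{item:exmetric}), and then arguing that monotonicity of $g_1$ turns these base inequalities into monotone chains — this is a standard induction but should be stated. The other mild subtlety is the index bound: one must observe that a strictly increasing chain $P_1 = A_0 \subsetneq A_1 \subsetneq \cdots \subsetneq A_{n_1}$ inside the subsets of $X$ containing $P_1$ has length at most $|X| - |P_1|$, since $|A_0| = |P_1|$, each step increases cardinality by at least one, and $|A_{n_1}| \leq |X|$. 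Everything else — that $g_1$ and $g_2$ are well-defined self-maps of $Y_1$ and $Y_2$, that fixed points of $g_1$ yield zone diagrams via the $R_2 := T_2(R_1)$ trick — is already in Lemma \ref{lem:DomProperties} and the proof of Theorem \ref{thm:n_is_2}.
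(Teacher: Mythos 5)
Your proposal is correct and follows essentially the same route as the paper: iterate the monotone map $g_1$ from the bottom element $P_1$ (respectively the top element $X$) of the finite lattice $Y_1$, bound the number of iterations by a cardinality count, and convert the resulting fixed point of $g_1$ into a zone diagram by setting $R_2:=T_2(R_1)$, with the symmetric argument handling $g_2$.
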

\begin{proof}
We will show this for the first case. The other cases are proved
similarly. Since $g_1$ is monotone and $P_1\subseteq g_1(P_1)$, it
follows that the sequence $a_{t}=g_1^{t}(P_1)
,t\in \N\bigcup\{0\}$ is increasing. Since $\mathcal{P}^{*}(X)$ is finite,
the sequence becomes constant starting from some index $n_1$. One
can give the following linear estimate instead of the ``default'' exponential one by
 observing that the sequence $b_{t}=|X|-|g_1^{t}(P_1)|$ is a
decreasing sequence of nonnegative integers, and hence also becomes
constant starting from $n_1\leq b_0$. At this point
$\{a_{t}\}_{t}$ becomes constant. Let $R_1:=a_{n_1}$.
Then $R_1=a_{n_1}=a_{n_1+1}=g_1(R_1)$, and hence $R_1$ is a fixed
point of $g_1$, i.e., $R_1=\dom(P_1,\dom(P_2,R_1))$. Letting
$R_2:=T_2(R_1)=\dom(P_2,R_1)$, we obtain that $(R_1,R_2)$ is a zone
diagram in $X$.
\end{proof}
\begin{thm}\label{thm:FiniteDoubleZone}
Let $(X,d)$ be a finite $m$-space and let $P=(P_k)_{k\in K}$, $K$ finite, be a given tuple
of nonempty sets in $X$. Then there are nonnegative integers $n_1,N_1\leq \sum_{k\in K}(|X|-|P_k|)$ such that
\begin{equation*}
R=\Dom^{2n_1}P,\quad S=\Dom^{2N_1}(X)_{k\in K}
\end{equation*}
are double zone diagrams in $X$.
\end{thm}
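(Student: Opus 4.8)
The plan is to read off Theorem \ref{thm:FiniteDoubleZone} as the ``global'' version of Theorem \ref{thm:FiniteZone}, working on the whole product lattice $(Y,\subseteq)$ of Lemma \ref{lem:DomProperties} with the monotone mapping $g=\Dom^2$ in place of the single coordinate maps $g_1,g_2$. First I would record the two order relations that get the iteration started. By Lemma \ref{lem:DomProperties}\eqref{item:Y} the mapping $\Dom$ sends $Y$ into $Y$, hence so does $g=\Dom^2$; in particular $g(P)\in Y$, which by the very definition of $Y$ means $P\subseteq g(P)$ componentwise. Dually, $(X)_{k\in K}$ is the top element of $Y$ (its $k$-th coordinate $X$ is the largest member of $Y_k$), so trivially $g\big((X)_{k\in K}\big)\subseteq (X)_{k\in K}$.

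Next I would invoke monotonicity of $g=\Dom^2$ (Lemma \ref{lem:DomProperties}\eqref{item:Dom2Monotone}): starting from $P\subseteq g(P)$ and applying $g$ repeatedly, the sequence $a_t:=\Dom^{2t}(P)=g^t(P)$, $t\in\N\cup\{0\}$, is increasing in $(Y,\subseteq)$; starting from $g\big((X)_{k\in K}\big)\subseteq (X)_{k\in K}$ and applying $g$ repeatedly, the sequence $c_t:=\Dom^{2t}\big((X)_{k\in K}\big)=g^t\big((X)_{k\in K}\big)$ is decreasing. Since $X$ is finite, $\mathcal{P}^*(X)$ and hence $Y$ are finite, so each monotone sequence is eventually constant, say from indices $n_1$ and $N_1$ on. At those indices $a_{n_1}=a_{n_1+1}=g(a_{n_1})$ and $c_{N_1}=c_{N_1+1}=g(c_{N_1})$, so $R:=a_{n_1}=\Dom^{2n_1}(P)$ and $S:=c_{N_1}=\Dom^{2N_1}\big((X)_{k\in K}\big)$ are fixed points of $\Dom^2$, i.e., double zone diagrams, exactly as claimed.

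It remains to replace the ``default'' bound (the length of a chain in $Y$, which is exponential in $|X|$) by the stated linear one, and this is done with a potential argument just as in Theorem \ref{thm:FiniteZone}. For the first sequence put $\beta_t:=\sum_{k\in K}\big(|X|-|a_{t,k}|\big)$, where $a_{t,k}$ is the $k$-th component of $a_t$. Then $\beta_t$ is a nonincreasing sequence of nonnegative integers with $\beta_0=\sum_{k\in K}\big(|X|-|P_k|\big)$, and as long as $a_t\neq a_{t+1}$ at least one component grows strictly, forcing $\beta_{t+1}<\beta_t$; hence $a_t$ is already constant for $t\geq n_1$ with $n_1\leq \beta_0=\sum_{k\in K}\big(|X|-|P_k|\big)$. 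For the second sequence the same argument applies to $\gamma_t:=\sum_{k\in K}\big(|c_{t,k}|-|P_k|\big)$, which is nonnegative because $c_t\in Y$ gives $P_k\subseteq c_{t,k}$, and nonincreasing because $c_t$ decreases, so $N_1\leq \gamma_0=\sum_{k\in K}\big(|X|-|P_k|\big)$.

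There is no genuine obstacle here; the only point requiring care is the order of iteration. One cannot iterate $\Dom$ itself and expect a monotone sequence, since $\Dom$ is antimonotone (Lemma \ref{lem:DomProperties}\eqref{item:Dom2Monotone}); the argument really needs $\Dom^2$ together with the two ``trapping'' inequalities $P\subseteq \Dom^2(P)$ and $\Dom^2\big((X)_{k\in K}\big)\subseteq (X)_{k\in K}$, both of which rest on $\Dom$ preserving $Y$. Everything past that is the finiteness-plus-potential bookkeeping already carried out in Theorem \ref{thm:FiniteZone}.
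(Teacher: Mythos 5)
Your proposal is correct and follows essentially the same route as the paper: the inclusions $P\subseteq\Dom^2 P$ and $\Dom^2(X)_{k\in K}\subseteq (X)_{k\in K}$ from Lemma \ref{lem:DomProperties}\eqref{item:Y}, monotonicity of $\Dom^2$ to get monotone sequences that stabilize by finiteness, and a cardinality potential to obtain the linear bound. The only cosmetic difference is that you track the decreasing deficit $\sum_{k}(|X|-|a_{t,k}|)$ where the paper tracks the increasing sum $\sum_k|(\Dom^{2t}P)_k|$; these are equivalent.
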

\begin{proof}
Take $Y$ from Lemma \ref{lem:DomProperties}\eqref{item:Y}. 
Then $\Dom^2 P, \Dom^2(X)_{k\in K}\in Y$, so  $P\subseteq \Dom^2 P$ and 
$\Dom^2(X)_{k\in K}\subseteq (X)_{k\in K}$. 
Since the mapping $\Dom^2$ is monotone, 
the sequence $\{\Dom^{2t}P\}_{t=1}^{\infty}$ is increasing 
and the sequence $\{\Dom^{2t}(X)_{k\in K}\}_{t=1}^{\infty}$ is decreasing. 
Since $Y$   is finite, these sequences become constant from a certain 
point on, and this constant must be a fixed point of $\Dom^2$. As for the 
estimate, by defining the sequence $a_t=\sum_{k\in K}|(\Dom^{2t}P)_k|$, we see
that $\{a_t\}_{t=0}^{\infty}$ is increasing and can take integer values between $\sum_{k\in K}|P_k|$ and $\sum_{k\in K}|X|$, and when it becomes constant, so does $\{\Dom^{2t}P\}_{t=1}^{\infty}$.
\end{proof}
The above algorithm  may be applied to the (approximate) construction of 
zone diagrams
in normed spaces, by considering a large finite set of points
(grid) there, and constructing the zone diagram as above. However,
it should be checked in what sense the resulting zone
diagram is close (perhaps with respect to the Hausdorff distance) to the 
real one(s). This algorithm can also be used for finding a stable 
configuration of the combinatorial game described in Section 
\ref{sec:combi}.\\

\section{Concluding remarks and open problems}
The most interesting open problem is whether existence holds for
a general cardinal $n=|K|$. Our method of proof for the case $n=2$ cannot
be carried over to the general case. This can be clearly illustrated already in the case
$n=3$. By Theorem \ref{thm:DoubleZoneDiagram}, we do know that a double 
zone diagram $R=(R_1,R_2,R_3)$ exists, and by definition it satisfies 
the following system of equations:
\begin{equation*}
\begin{array}{l}
R_1=\dom(P_1,\dom(P_2,R_3\,\textstyle{\bigcup}\, R_1)\,\textstyle{\bigcup}\,\dom(P_3,R_1\,\textstyle{\bigcup}\, R_2)),\\
R_2=\dom(P_2,\dom(P_3,R_1\bigcup R_2)\bigcup\dom(P_1,R_2\bigcup R_3)),\\
R_3=\dom(P_3,\dom(P_1,R_2\bigcup R_3)\bigcup\dom(P_2,R_3\bigcup R_1)).
\end{array}
\end{equation*}
 Unfortunately, in contrast to the case $n=2$, the above system is coupled, and
 it is not clear how to obtain a zone diagram $Z=(Z_1,Z_2,Z_3)$ from $R$. We will, however, describe several possible ways to construct a zone diagram and  point out the difficulties
  we have encountered.

The first way is to use the double zone diagram $R$ in a similar way to the one used in the proof of Theorem \ref{thm:n_is_2}, by defining $Z_1:=R_1,\,Z_2:=R_2$ and $Z_3:=\dom(P_3,Z_1\bigcup Z_2)$. However, it is not clear why $(Z_1,Z_2,Z_3)$ is a zone diagram because  it is not
clear from the above system why $Z_1=\dom(P_1,Z_2\bigcup Z_3)$ and $Z_2=\dom(P_2,Z_3\bigcup Z_1)$.

In the second way we again use the double zone diagram $R$. 
We define $Z_1:=R_1$, plug $R_1$ in the second and third equations, and
then define $Z_2,Z_3$ as solutions of the resulting system:
\begin{equation*}
\begin{array}{l}
Z_2=\dom(P_2,\dom(P_3,R_1\bigcup Z_2)\bigcup\dom(P_1,Z_2\bigcup Z_3)),\\
Z_3=\dom(P_3,\dom(P_1,Z_2\bigcup Z_3)\bigcup\dom(P_2,Z_3\bigcup R_1)).
\end{array}
\end{equation*}
This system does have  a solution because the mapping on the right hand side is monotone.
However, it is not clear why $(Z_1,Z_2,Z_3)$ will be a zone diagram. A similar phenomenon occurs
if one takes $Z_1:=R_1,Z_2:=R_2$ and defines $Z_3$ as (one of) the solution(s) of the resulting equation.

The third way  is to look at the system defining the zone diagram:
\begin{equation*}
R_1=\dom(P_1,R_2\,\textstyle{\bigcup}\, R_3),\,R_2=\dom(P_2,R_3\,\textstyle{\bigcup}\,
R_1),\,R_3=\dom(P_3,R_1\,\textstyle{\bigcup}\, R_2).
\end{equation*}
(Now $R$ is no longer the above double zone diagram.)
 We  eliminate one of the unknowns, say $R_1$, and arrive at the
 system
\begin{equation*}
R_2=\dom(P_2,R_3\, \textstyle{\bigcup}\, \dom(P_1,R_2\,\textstyle{\bigcup}\, R_3)),\, R_3=\dom(P_3,R_2\,\textstyle{\bigcup}\, \dom(P_1,R_2\,\textstyle{\bigcup}\, R_3)).
\end{equation*}
If we could show that the above system has a solution $(R_2,R_3)$, then by defining
$R_1:=\dom(P_1,R_2\bigcup R_3)$ we would indeed obtain that $R=(R_1,R_2,R_3)$ is a zone diagram in $X$.
 Unfortunately, the mapping on the right hand side is no longer
monotone, so it is not clear why there exists a solution.

Therefore, in order to prove existence for general $n$ one has to
adopt other strategies, or to somehow modify the above ones. At the
moment we have several partial results in specific cases (a class of
normed spaces) which are in preparation, but the
general case of $m$-spaces is open, and even in the case where $X$ is
a finite metric space the situation is not clear (but we feel that the
combinatorial interpretation may help here). Anyway, we conjecture
that existence holds in any $m$-space, at least for finite
$n$. The infinite case may be problematic as the remark in
Example \ref{ex:ProbInfinite} shows, but we conjecture that
existence holds in any metric space for any cardinal if the sets
$P_k$ are ``nice'' and ``far enough'' from each other.

The question of uniqueness is also interesting. We conjecture that
uniqueness holds at least in finite dimensional normed spaces, assuming the sets $P_k$ are ``nice'' and ``far enough'' from each other. It is also
 of interest to determine whether the second assertion in Corollary \ref{cor:unique_n2} holds for general $n$.  Theorem  \ref{thm:AntiMonotoneUnique} shows that uniqueness arguments may be a strategy for proving existence.
 A related question is to what extent uniqueness is the probable case,
at least for finite metric spaces embedded in normed spaces. For
instance, Example \ref{ex:simple} shows that if $X$ is embedded in
the interval [-1,1], then uniqueness holds with the exception of one
case ($a=0$), so it holds with probability 1. The cases where it
does not hold may be analogous in some sense to eigenvalues of a
linear operator, and it would be of interest to investigate them.

Finally, it would be interesting to find some applications of zone
diagrams to other parts of mathematics and elsewhere. We think that
zone diagrams do have this potential, say in optimization theory and
computer science, and even in the natural sciences (see \cite[pp. 
1183-4]{AMTn} or \cite[p. 341]{AMT2}), including the general case of
$m$-spaces.  A reader who is familiar with Voronoi diagrams might
have noticed that given a tuple $P=(P_k)_{k\in K}$, $\Dom(P)$ is
nothing but the Voronoi diagram induced by $P$. Hence Voronoi
diagrams are related to zone diagrams, and since they have many
applications, including in the case of generalized distances (see
\cite{BregmanVoronoi} and the references therein), this may also be
true for zone diagrams. The half-spaces $Eq^+(x,y)$ and $Eq^-(x,y)$ in the Hilbert ball
with the hyperbolic metric, which appear in \cite[pp. 112-115]{GoebelReich}, provide another example of a dominance region, and thus may be related to zone diagrams. We also note
that the combinatorial interpretation and the examples given in the
beginning of the paper may also point to some applications.\\\\


{\noindent }\textbf{Acknowledgments}\vspace{0.2cm}\\ The second author 
was partially supported by
the Fund for the Promotion of Research at the Technion and by the
Technion President's Research Fund.
Both authors thank the referee for several helpful comments and 
suggestions. 



\begin{thebibliography} {6}

\bibitem {AMT2} T. Asano, J. Matou\v{s}ek and T. Tokuyama, 
{\it The distance trisector curve}, Adv. Math. {\bf 212} (2007), 338--360. 

\bibitem {AMTn} T. Asano, J. Matou\v{s}ek and T. Tokuyama, 
{\it Zone diagrams: existence, uniqueness, and algorithmic challenge}, 
SIAM J. Comput. {\bf 37} (2007), 1182--1198. 

\bibitem {GoebelReich} K. Goebel and S. Reich, {\it Uniform Convexity, 
Hyperbolic Geometry, and Nonexpansive Mappings}, Monographs and 
Textbooks in Pure and Applied Mathematics, vol. {\bf 83}, Marcel Dekker, 
New York and Basel, 1984.

\bibitem {Knaster} B. Knaster, {\it Un th\'eor\`eme sur les fonctions 
d'ensembles}, Ann. Soc. Polon. Math. {\bf 6} (1928), 133--134. 

\bibitem {BregmanVoronoi} F. Nielsen, J.-D. Boissonnat and R. Nock, 
{\it On Bregamn Voronoi diagrams}, ACM-SIAM Symposium on Discrete 
Algorithms, 2007, pp. 746--755.

\bibitem {Tarski} A. Tarski, {\it A lattice-theoretical fixpoint theorem 
and its applications}, Pacific J. Math. {\bf 5} (1955), 285--309.  

\end{thebibliography}

\end{document}